\theoremstyle{plain}
\newtheorem{thm}{Theorem}[subsection]
\newtheorem{cor}[thm]{Corollary}
\newtheorem{lem}[thm]{Lemma}
\newtheorem{prop}[thm]{Proposition}
\theoremstyle{definition}
\newtheorem{dfn}[thm]{Definition}
\theoremstyle{remark}
\newtheorem{rem}[thm]{Remark}
\theoremstyle{plain}
\newcommand{\R}{\mathbb{R}}
\newcommand{\Z}{\mathbb{Z}}
\newcommand{\Crit}{\textnormal{Crit\/}}
\newcommand{\fcaddress}{francois.charette8@gmail.com}
\begin{document}

\title[Floer homology of fibrations I]{Floer homology of fibrations I: Representing flow lines in Moore path spaces}
\date{\today}

\author{Fran\c{c}ois Charette}
\thanks{The author was supported by the University of Ottawa.}

\address{Fran\c{c}ois Charette, University of Ottawa} \email{\fcaddress}

%

\begin{abstract}
In the article \cite{Bar-Cor:Serre}, Barraud and Cornea enriched the Lagrangian Floer complex by adding cubical chains in the based loop space of the Lagrangian, and recovered the Leray-Serre spectral sequence of the based path space fibration, assuming that the Lagrangian is weakly exact and simply connected.  In the present article, we remove the simple connectivity assumption and adapt the construction to any Hurewicz fibration.  To this end, we introduce a stronger notion of local systems than the classical one, as a topological functor from the free path space of a manifold to the category of topological spaces.  Such functors arise naturally from a Hurewicz fibration.
\end{abstract}

\maketitle

%
%


\section{Introduction}
The goal of this article is to enrich the coefficients of Morse theory by adding singular chains in fibers above the base manifold, which are playing the role of local systems of chain complexes.  The resulting homology computes the singular homology of the total space of an associated Hurewicz fibration.  Our motivation comes from Lagrangian Floer homology, where unitary local systems were first introduced by Kontsevich in his ICM lecture \cite{Ko:ICM-HMS}.  Since then, various local systems were used in the theory, ranging from rank one local systems to coefficients in the group ring of the fundamental group of a Lagrangian, as well as higher rank local systems (see for example Abouzaid \cite{Abouzaid:nearby}, Abouzaid-Kragh \cite{Abou-Kragh:simplehomtypenearby}, Chantraine-Dimitroglou Rizell-Ghiggini-Golovko \cite{CDRGG:noncommaugcat}, Cho \cite{Cho:non-unitary}, Damian \cite{Dam:Audinsconj}, Konstantinov \cite{Konst:higherlocal}, Suarez \cite{Su:exactcob}, Sullivan \cite{SullM:kinvariants}), Zapolsky \cite{Zapol:orientations}, and many others).  We refer to Konstantinov's article for a presentation of Floer homology and the Fukaya category with higher rank local systems, for monotone Lagrangian submanifolds.  In this construction, the Floer differential is filtered and the first page of the associated spectral sequence is Morse homology with local coefficients, which,  by a result of Abouzaid \cite[Appendix B]{Abouzaid:nearby}, is isomorphic to singular homology with local coefficients.

In the present article, we restrict our attention to Morse homology and enrich the coefficients by strengthening the notion of local system, based on the following simple observation:  a fibration $\xymatrix{F \ar[r] & E \ar[r] & B}$ should be thought of as a local system of topological spaces above the base space $B$, assigning to each point of $B$ the fibre $F_b$ above it.  Classically \cite{Steenrod:local}, a local system on a space $B$ is a functor from the fundamental groupoid of $B$ to some category, which assigns to each homotopy class of paths an isomorphism in that category.  We introduce local systems of topological spaces or chain complexes, as a functor of topological categories from the free Moore path space of $B$ (paths with both endpoints allowed to move freely in $B$), which assigns to each free Moore path a weak isomorphism (these will be either homotopy equivalences of topological spaces or chain homotopy equivalences of chain complexes);  homotopic paths need not induce the same weak isomorphism.   

Our construction generalizes work of Barraud and Cornea \cite{Bar-Cor:Serre}, who enrich the coefficients in Floer homology for a weakly exact, simply connected Lagrangian, to take into account chains in the based loop space; they do so by representing moduli spaces of Floer trajectories \emph{of arbitrary dimensions} in the based loop space of a Lagrangian.  Their construction is related to the based path space fibration $\xymatrix{\Omega B \ar[r] & PB \ar[r] & B}$, since the enriched differential is filtered and the associated spectral sequence is isomorphic to the Leray-Serre spectral sequence of the fibration.  In our case, non trivial fundamental groups are allowed and we recover the Leray-Serre spectral sequence of any fibration whose base is a closed manifold.  By restricting the coefficients to zero dimensional chains in the fibre, we recover the various constructions of local systems in Floer homology mentioned above (for weakly exact Lagrangians), thus providing a more unified framework in which to use local systems.

We plan to generalize the construction to monotone Lagrangians, where bubbling issues in higher dimensional moduli spaces  should be addressed (see Barraud-Cornea \cite{Bar-Co:Quantization} for the Hamiltonian Floer homology case).  As the Floer differential of a single Lagrangian is a deformation of the Morse differential, it is necessary to first understand the simpler context of Morse theory, as we do here.

\subsection{Main results}
\begin{thm}\label{thm:main}
Given a Hurewicz fibration $\xymatrix{F \ar[r] & E \ar[r]^-\pi & B}$, where $B$ is a closed manifold, and a Morse-Smale pair $\mathcal{D}=(f, \rho)$ on $B$, there exists an enrichment of the Morse complex $\mathcal{C}(B, \mathcal{D}; \mathcal{C}_*(F))$, with local coefficients in chains in the fiber, whose homology is isomorphic to $H_*(E;\Z_2)$.  Moreover, there exists a filtration of this complex, such that the resulting spectral sequence $(E^*_{*,*}, d_*)$ satisfies:
\begin{itemize}
 \item $E^1_{p,q} \cong C_p^M(B, \mathcal{D}; \mathcal{H}_q(F;\Z_2))$.  The differential $d_1 \colon E^1_{p,q} \to E^1_{p-1, q}$ is the Morse differential with local coefficients in the homology of the fibre.
 \item $E^k$ is isomorphic to the Leray-Serre spectral sequence of the fibration $\pi\colon E \to B$, for $k \geq 2$.
\end{itemize}
\end{thm}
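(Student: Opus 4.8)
The plan is to construct the enriched complex directly from the Moore path space functor of the preceding sections, to read off its spectral sequence by filtering by Morse index, and finally to identify its filtered homotopy type with that of the singular chains of $E$ filtered by preimages of the Morse skeleta of $B$. For the chain groups I take, for each critical point $x\in\Crit(f)$, the cubical singular chains $C_*(F_x)$ of the fibre $F_x=\pi^{-1}(x)$, shifted by the index, so that
\[ \mathcal{C}(B,\mathcal{D};\mathcal{C}_*(F))=\bigoplus_{x\in\Crit(f)}C_{*-\ind(x)}(F_x). \]
The differential is assembled from the compactified moduli spaces $\overline{\mathcal{M}}(x,y)$ of negative gradient flow lines. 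Following Barraud--Cornea \cite{Bar-Cor:Serre}, but replacing the based loop space by the free Moore path space $P(B)$, I would fix a \emph{representing chain system}: for each ordered pair $(x,y)$ a cubical chain $s_{x,y}$ in the Moore paths from $x$ to $y$ representing $\overline{\mathcal{M}}(x,y)$, chosen compatibly with the concatenation maps on the codimension-one boundary $\bigcup_z\overline{\mathcal{M}}(x,z)\times\overline{\mathcal{M}}(z,y)$. Evaluating the local system functor on $s_{x,y}$ produces an operation $\Phi_{x,y}\colon C_q(F_x)\to C_{q+\dim\overline{\mathcal{M}}(x,y)}(F_y)$, and the total differential $D$ is the sum of the internal boundaries of the $C_*(F_x)$ with the $\Phi_{x,y}$.

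That $D^2=0$ would follow exactly as in \cite{Bar-Cor:Serre}: the compatibility of $s_{x,y}$ with the boundary decomposition, together with the functoriality of the local system under concatenation (the weak isomorphism attached to a concatenated path is homotopic to the composite), forces the cross terms to cancel mod $2$. The point of working with the Moore path functor is that no basepoint and no simple connectivity are needed, so arbitrary $\pi_1(B)$ is allowed. I then filter by the Morse index $p=\ind(x)$. The $E^0$ page is $\bigoplus_x C_*(F_x)$ with only the internal differential, so $E^1_{p,q}=\bigoplus_{\ind(x)=p}H_q(F_x)\cong C_p^M(B,\mathcal{D};\mathcal{H}_q(F;\Z_2))$, and the induced $d_1$ is the rigid count of flow lines weighted by the monodromy each flow line induces on fibre homology, i.e. the Morse differential with local coefficients. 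Hence $E^2_{p,q}\cong H_p(B;\mathcal{H}_q(F;\Z_2))$ by the identification of Morse homology with local coefficients with singular homology with local coefficients \cite[Appendix B]{Abouzaid:nearby}.

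It remains to match this with the fibration itself. Using the Morse--Smale pair, the unstable manifolds endow $B$ with a CW structure having one $p$-cell per index-$p$ critical point; write $B_p$ for the $p$-skeleton and $E_p=\pi^{-1}(B_p)$. Filtering $C_*(E;\Z_2)$ by the $C_*(E_p)$ is one of the standard constructions of the Leray--Serre spectral sequence, which converges to $H_*(E;\Z_2)$ and has $E^2$-page $H_p(B;\mathcal{H}_q(F;\Z_2))$. Over each contractible cell the Hurewicz fibration trivialises, so $C_*(E_p,E_{p-1})$ splits as $\bigoplus_{\ind(x)=p}C_{*-p}(F_x)$, matching the associated graded of the enriched complex. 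The plan is to build a filtered chain map from $\mathcal{C}(B,\mathcal{D};\mathcal{C}_*(F))$ to $C_*(E;\Z_2)$, sending a fibre chain over $x$ to its image under a chosen trivialisation over $W^u(x)$ and transporting it along the representing paths $s_{x,y}$ across the attaching maps, and to check that it is an isomorphism on $E^1$ (equivalently $E^2$).

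The main obstacle is precisely this last compatibility: verifying that the Moore path representation of the flow-line moduli spaces agrees, up to coherent homotopy, with the attaching maps of the CW structure pulled back to $E$, so that the comparison map is an honest filtered chain map inducing the identity on $E^2$. This is delicate because the representing chains $s_{x,y}$ are chosen abstractly whereas the CW attaching data come from the flow, and the two must be reconciled inductively over the skeleta while respecting the (possibly nontrivial) monodromy. Once the comparison map is shown to be a filtered quasi-isomorphism realizing the isomorphism on $E^2$, the comparison theorem for first-quadrant spectral sequences yields agreement of the two spectral sequences for all $k\geq2$ and, on passing to the limit, the isomorphism $H_*(\mathcal{C}(B,\mathcal{D};\mathcal{C}_*(F)))\cong H_*(E;\Z_2)$.
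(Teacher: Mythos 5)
Your construction of the enriched complex, the proof that $D^2=0$, the index filtration, and the computation of the $E^1$ and $E^2$ pages all match the paper's argument essentially verbatim (modulo a harmless conflation: in the paper the representing chains $s^x_y$ live in $\overline{\mathcal{M}}(x,y)$ and are pushed into $P_{x,y}B$ by the evaluation map $\Phi$ to give $a^x_y=\Phi_*(s^x_y)$). The divergence, and the problem, is in the comparison with the Leray--Serre spectral sequence. You filter $C_*(E)$ by the preimages $\pi^{-1}(B_p)$ of the Morse skeleta and propose to build a filtered chain map cell by cell from trivialisations over the unstable manifolds; you then explicitly flag the reconciliation of the representing chains with the attaching maps as ``the main obstacle'' and leave it unresolved. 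That step is not a technicality to be deferred: it is the actual content of the second half of the theorem, and without it you have only an abstract isomorphism of $E^2$ pages, not a map of spectral sequences, so you cannot invoke the comparison theorem to get agreement for $k\geq 2$ or the isomorphism on total homologies.

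The paper fills exactly this gap with the blow-up of unstable manifolds. For each $x\in\Crit f$ one forms $\widehat{\mathcal{M}}(x)=\overline{\mathcal{M}}(x,m)\times[0,f(x)]/\sim$, a disc of dimension $|x|$ with $\partial\widehat{\mathcal{M}}(x)=\bigcup_y\overline{\mathcal{M}}(x,y)\times\widehat{\mathcal{M}}(y)$, carrying a generating chain $\lambda_x$ with $\partial\lambda_x=\sum_y s^x_y\times\lambda_y$ and a continuous representation $\beta'$ into the Moore path space $P_{x,B^{|x|}}B$ compatible with concatenation, i.e.\ $\beta'_*(s^x_y\times\lambda_y)=a^x_y\cdot\beta'_*(\lambda_y)$. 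This is precisely the ``coherent homotopy'' datum you identify as missing: it packages, once and for all, the compatibility between the abstractly chosen representing chain system and the cell structure coming from the flow. The comparison map is then the explicit formula $\Psi(\gamma\otimes x)=T_*(\gamma\times\beta'_*(\lambda_x))$, which lands in Serre's cubical filtration $T_{p,q}$ of $C_*(E)$ (rather than your skeletal filtration), is verified to be a chain map by a one-line computation using $\partial\lambda_x=\sum_y s^x_y\times\lambda_y$ and the transitivity of $T$, and is identified on $E^1$ with the cellular complex of $B$ with coefficients in $\mathcal{H}_*(F)$ via Serre's $B\otimes F$ map. If you want to complete your version of the argument, you need either to import this blow-up construction or to supply an equivalent inductive system of compatible homotopies over the skeleta; note also that a Hurewicz fibration over a cell is only fibre homotopy trivial, so ``its image under a chosen trivialisation'' should be replaced by transport $T$ along the paths $\beta'_*(\lambda_x)$, which is what makes the monodromy come out right on $E^1$.
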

\begin{rem}
A similar result for fibrations both whose base and fiber are manifolds was obtained by Hutchings \cite{Hutchings:families}.  His methods make use of chains in the base and a Morse function on the fibre, but they do not involve higher dimensional moduli spaces of flow lines.
\end{rem}

By choosing specific fibrations, we recover various constructions from the litterature by restricting our enriched complex to zero dimensional chains in the fibre.  For instance, let $E_{\text{reg}}$ be the local system on $B$ with monodromy $\rho\colon \Z[\pi_1(B, b_0)] \to \text{End}(\Z[\pi_1(B, b_0)])$ induced from left multiplication of $\pi_1(B)$ on itself.  The image of $\rho$ generates a sublocal system $E_{\rho}$ of $\text{End}(\Z[\pi_1(B, b_0)])$, which is a local system of operator rings in Steenrod's terminology \cite[Section 5]{Steenrod:local} and is denoted by $\text{End}_{\text{mon}}(E_{\text{reg}})$ in \cite[Section 2.5]{Konst:higherlocal}.  This system is closely related to the free loop space fibration $\xymatrix{\Omega B \ar[r] & \Lambda B \ar[r] & B},$ although it sees only part of the second page of its Leray-Serre spectral sequence.
\begin{prop}
Let $\mathcal{E}$ be the Leray-Serre spectral sequence of the free loop space fibration of $B$ and let $E_\rho$ be the local system of operator rings defined above.  Then, the Morse homology of $B$ with local coefficients in $E_\rho$ is isomorphic to $\mathcal{E}^2_{*,0}$.
\end{prop}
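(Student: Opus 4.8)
The plan is to recognise both sides as the singular homology of $B$ with coefficients in one and the same local system, namely the group ring $\Z[\pi_1(B,b_0)]$ equipped with the conjugation action. On the one hand, I will use the standard comparison that Morse homology of $B$ with coefficients in a local system $L$ computes singular homology $H_*(B;L)$ (classical; over $\Z_2$ this is also the content of the $E^1$-description in Theorem~\ref{thm:main} after passing to $d_1$-homology). On the other hand, by the very definition of the Leray--Serre spectral sequence of the evaluation fibration $\mathrm{ev}_0\colon \Lambda B \to B$, one has $\mathcal{E}^2_{p,0} = H_p(B;\mathcal{H}_0(\Omega B))$, where $\mathcal{H}_0(\Omega B)$ is the coefficient system $b \mapsto H_0(\mathrm{ev}_0^{-1}(b))$. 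Thus the whole statement reduces to exhibiting an isomorphism of local systems $E_\rho \cong \mathcal{H}_0(\Omega B)$.

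First I would pin down the monodromy of $E_\rho$. The fiber of $E_{\text{reg}}$ is $\Z[\pi_1(B,b_0)]$ with $h$ acting by the left-multiplication operator $L_h$, so the induced local system $\text{End}(\Z[\pi_1(B,b_0)])$ carries the conjugation action $\phi \mapsto L_h\,\phi\,L_h^{-1}$. Since $E_\rho$ is generated by the image of $\rho$, its fiber is the $\Z$-span of $\{L_g : g\in\pi_1(B,b_0)\}$, which is free on $\pi_1(B,b_0)$ because the left regular representation is faithful; hence the fiber of $E_\rho$ is $\Z[\pi_1(B,b_0)]$ as a $\Z$-module. On a generator the monodromy reads $L_h\,L_g\,L_h^{-1} = L_{hgh^{-1}}$, so under $L_g \leftrightarrow g$ the monodromy of $E_\rho$ is the conjugation action $g \mapsto hgh^{-1}$ of $\pi_1(B,b_0)$ on itself, extended $\Z$-linearly.

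Next I would compute the monodromy of $\mathcal{H}_0(\Omega B)$. The fiber of $\mathrm{ev}_0$ over $b_0$ is the based loop space $\Omega B$, and $\pi_0(\Omega B) = \pi_1(B,b_0)$, so $H_0(\Omega B;\Z) = \Z[\pi_1(B,b_0)]$. Parallel transport of the fibration along a based loop $\alpha$ slides the basepoint of a loop around $\alpha$, and on path components it sends $[\gamma] \mapsto [\alpha\cdot\gamma\cdot\alpha^{-1}]$; hence the monodromy on $H_0$ is again conjugation of $\pi_1(B,b_0)$ on itself. As a consistency check, the orbits of this action are exactly the conjugacy classes, in agreement with $\pi_0(\Lambda B)$ being the set of free homotopy classes of loops in $B$. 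Therefore $\mathcal{H}_0(\Omega B) \cong \Z[\pi_1(B,b_0)]$ with the conjugation action.

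Comparing the two computations, $E_\rho$ and $\mathcal{H}_0(\Omega B)$ are isomorphic as local systems on $B$, whence
\[
H^M_*(B,\mathcal{D}; E_\rho) \cong H_*(B; E_\rho) \cong H_*(B; \mathcal{H}_0(\Omega B)) = \mathcal{E}^2_{*,0},
\]
which is the claim; note in particular that $E_\rho$ sees only the row $q=0$, the rest of $\mathcal{E}^2$ requiring $\mathcal{H}_q(\Omega B)$ for $q>0$. I expect the only delicate point to be the explicit determination of the free-loop-space monodromy on $\pi_0$ of the fiber: one must verify that the chosen parallel transport realises conjugation \emph{in the same direction} as the conjugation appearing in $E_\rho$, i.e.\ that the left-multiplication convention for $E_{\text{reg}}$ is matched by the basepoint-sliding convention for $\mathrm{ev}_0$. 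Reconciling these two conventions is the crux, while the remaining identifications are formal.
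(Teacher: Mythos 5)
Your argument is correct and matches the paper's intended proof: the paper disposes of this proposition by applying Theorem~\ref{thm:main} to the free loop space fibration with the conjugation transport $T(\alpha,\gamma)=\gamma^{-1}\alpha\gamma$ of \S\ref{sec:freeloopspace}, where it is computed (just as you do) that the resulting coefficient system $\mathcal{H}_0(\Omega B)$ is $\Z_2[\pi_1(B,b_0)]$ with the conjugation action, hence agrees with $E_\rho$, so that $E^2_{*,0}\cong H^M_*(B;E_\rho)$ is identified with $\mathcal{E}^2_{*,0}$. Your only deviations are cosmetic: you substitute the classical comparison of Morse and singular homology with local coefficients for the $E^2$-statement of Theorem~\ref{thm:main} (the paper itself cites the same comparison in proving Theorem~\ref{thm:isotoLeraySerre}), and you work over $\Z$ where the paper's machinery is set up over $\Z_2$; the direction-of-conjugation point you flag is a genuine convention check but is harmless since either convention yields isomorphic local systems.
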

\begin{prop}
In the case of the Moore based path space fibration $\xymatrix{\Omega B \to PB \to B}$, our construction recovers the complex of Barraud-Cornea \cite{Bar-Cor:Serre}.  Restricting to zero-chains $C_0(\Omega B)$, we recover Damian's lifted Morse complex \cite{Dam:Audinsconj}, computing the homology of the universal cover of $B$.
\end{prop}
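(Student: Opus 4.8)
The plan is to verify both identifications by specializing the general construction of Theorem~\ref{thm:main} to the based Moore path space fibration and unwinding the definitions. Write $P_bB$ for the space of Moore paths in $B$ that start at the basepoint $b_0$ and end at $b$, so that the fibre over $b$ is $F_b=P_bB$ and the fibre over the basepoint is the based Moore loop space $\Omega B$. First I would record that the parallel transport of our local system along a Moore path $\gamma$ from $b$ to $b'$ is given by concatenation $\alpha\mapsto\alpha\cdot\gamma$, carrying $P_bB$ to $P_{b'}B$. Because Moore paths concatenate in a strictly associative way, this transport is an honest functor on the free Moore path space of $B$, not merely a functor up to homotopy, and the induced operation on chains makes $C_*(\Omega B)$ a genuine differential graded algebra under concatenation of loops---exactly the algebraic structure underlying \cite{Bar-Cor:Serre}.

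Next I would match the enriched differentials term by term. In the Barraud--Cornea complex one fixes, for each critical point $x$, a Moore path $\eta_x$ from $b_0$ to $x$; a compactified moduli space $\mathcal{M}(x,y)$ of negative gradient trajectories is then mapped into $\Omega B$ by concatenating $\eta_x$, the trajectory regarded as a Moore path from $x$ to $y$, and the reverse path $\overline{\eta_y}$, and one takes the image of its fundamental chain. In our construction the same moduli space is represented in the fibre by transporting along the trajectory, and the reference paths $\eta_x$ are precisely the data identifying each fibre $P_xB$ with the model fibre $\Omega B$. I would then check that, under this identification, the evaluation-followed-by-transport map agrees with the Barraud--Cornea concatenation map, so that the two complexes coincide generator by generator and differential by differential. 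As we work over $\Z_2$ there are no signs or orientations to reconcile, and the verification reduces to comparing the underlying continuous maps out of the moduli spaces.

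For the second assertion I would use the filtration of Theorem~\ref{thm:main} and concentrate on the row $q=0$. Restricting the coefficients to their fibre-degree-zero part means retaining only the zero-chains $C_0(\Omega B)$, on which the internal boundary $\partial^{\Omega B}$ vanishes, so that the sole surviving contribution to the differential comes from the rigid, zero-dimensional trajectories. Since $\pi_0(\Omega B)\cong\pi_1(B,b_0)$, passing to path components identifies $H_0(\Omega B;\Z_2)\cong\Z_2[\pi_1(B)]$, and the concatenation transport along an isolated trajectory becomes multiplication by the element of $\pi_1(B)$ that it represents together with the chosen reference paths. The resulting complex on $\bigoplus_{x\in\Crit(f)}\Z_2[\pi_1(B)]\cdot x$ is therefore Damian's lifted Morse complex \cite{Dam:Audinsconj}, and it is precisely the row $(E^1_{*,0},d_1)$ of the spectral sequence of Theorem~\ref{thm:main}, with $\mathcal{H}_0(F)=\Z_2[\pi_1(B)]$ the local system carrying the regular representation. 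Its homology is the Morse homology of $B$ with these coefficients, namely $H_*(\widetilde{B};\Z_2)$.

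The principal difficulty I anticipate is not conceptual but one of reconciling the precise models, so that the two constructions agree on the nose rather than merely up to quasi-isomorphism: matching the cubical chains of \cite{Bar-Cor:Serre} against the singular chains used here, and their concatenation conventions against the fibrewise transport---which is exactly where the strict associativity of Moore concatenation is indispensable. One must also set up the basepoint and reference-path conventions of \cite{Bar-Cor:Serre}, and the covering-space conventions of \cite{Dam:Audinsconj}, compatibly with the fibrewise identifications $P_xB\simeq\Omega B$. Finally it is worth recording the consistency check that the full enriched complex computes $H_*(PB;\Z_2)\cong\Z_2$, as $PB$ is contractible; this confirms that the homology of the universal cover in the second assertion genuinely comes from the row $q=0$ of the filtration rather than from the total homology.
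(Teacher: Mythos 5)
Your proposal is correct and follows essentially the same route as the paper, which proves this proposition simply by applying Theorem~\ref{thm:main} to the based Moore path space fibration of \S\ref{sec:moorepathspace}, where the transport is concatenation $(\alpha,\gamma)\mapsto\alpha\gamma$ and the induced system of operator rings on $H_0(\Omega B)\cong\Z_2[\pi_1(B)]$ is the left regular action, exactly as you identify. Your unwinding of the reference-path identifications $P_xB\simeq\Omega B$ and the observation that the $q=0$ row of $E^1$ is Damian's lifted complex supply the details the paper leaves implicit.
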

These propositions are proved directly by applying Theorem \ref{thm:main} to the constructions presented in Sections \ref{sec:freeloopspace} and \ref{sec:moorepathspace}.

\subsection{Acknowledgements}
This project started ages ago during my master's thesis in 2007, under the supervision of Octav Cornea.  I would like to thank him for suggesting the topic and for many valuable discussions over this long period of time!
The paper also benefited from conversations with Baptiste Chantraine, Cl\'ement Hyvrier, Emmanuel Giroux, and Momchil Konstantinov.
\section{Notations}
Given a topological space $B$, the free Moore path space of $B$ is the set $\Pi B = \{ \gamma\colon [0,r] \to B \; | \; r \geq 0 \}$; an element of this set is sometimes written $(\gamma, r)$  to specify the length of the path.  All path spaces are endowed with the compact open topology.  There are two evaluation maps $\text{ev}_0\colon\Pi B \to B$, $\text{ev}_0(\gamma) = \gamma(0)$ and $p\colon \Pi B \to B$, $p(\gamma, r) = \gamma(r)$.  This space can be seen as a topological category with objects $\text{Ob}(\Pi B) = \{ x \in B\}$.  Morphisms from $x$ to $y$ are given by the space of Moore paths from $x$ to $y$:  $\text{Mor}(x, y) = P_{x,y} B = \{\gamma\colon [0,r] \to B \; | \; r \geq 0, \; \gamma(0)=x, \; \gamma(r) = y \}$.  Composition of two morphisms $\gamma_1, \gamma_2$ is given by concatenation of paths, written from left to right as $\gamma_1 \gamma_2$, and is associative.  Identifying all homotopic paths yields the fundamental groupoid of $B$, $\Pi_1 B$.

Given subsets $X, Y \subset B$, $P_{X, Y} B$ is the space of Moore paths starting on $X$ and ending on $Y$.  Fixing $x \in B$, $P_{x,x}$ is the space of Moore loops based at $x$, denoted also by $\Omega B_x$; it is a topological monoid, with identity element given by the constant path based at $x$.  The free Moore loop space of $B$ is $\Lambda B = \{ \gamma\colon [0,r] \to B \; | \; r\geq 0, \; \gamma(0) = \gamma(r)\}$.  The Moore path space based at $x \in B$ is defined by $P B := P_{x, B} B$.

\subsection{Local systems}
We refer the reader to Steenrod's survey paper \cite{Steenrod:local} for the definitions of local systems, systems of operator rings and homology with local coefficients.

\section{Parallel transport and chain complexes}
\subsection{Fibrations and transport}
The material presented here is taken from May's book \cite[Chapter 3]{May:classifying}.  Given a continuous map $\pi\colon E \to B$, denote by $F_x$ the fiber $\pi^{-1}(x)$.  Define $\Gamma E \subset E \times \Pi B$ as the following pullback:
$$\xymatrix{
\Gamma E \ar[r] \ar[d]^{p_2} & E \ar[d]^\pi\\
 \Pi B \ar[r]^-{\text{ev}_0} & B
}$$
It comes with a map $\eta\colon E \to \Gamma E$, $\eta(e) = (e, \pi(e))$.  A lifting function for $\pi\colon E \to B$ is a map $T\colon \Gamma E \to E$ fitting in the commutative diagrams below:
$$\xymatrix{
\Gamma E \ar[r]^-T \ar[rd]^-{\Gamma \pi} \ar[d]^{p_2} & E \ar[d]^\pi & & E \ar[r]^-\eta \ar[rd]_1 & \Gamma E \ar[d]^-T\\
 \Pi B \ar[r]^{p} & B & & & E
}
$$
A parallel transport map (called a transitive lifting function in \cite{May:classifying}) is a lifting function $T$ with the property that, given $\alpha \in \Pi B$ and $\beta \in \Pi B$ such that $p(\alpha) = \beta(0)$, we have $T(T(e,\alpha), \beta) = T(e, \alpha \beta)$.  We have:
\begin{itemize}
\item A map $\pi\colon E \to B$ is a Hurewicz fibration if and only if it admits a lifting function (\cite[Proposition 3.4]{May:classifying}).
\item If $\pi$ is a fibration, then the maps $\eta$ and $T$ are inverse fiber homotopy equivalences. (\cite[Remarks 3.6]{May:classifying})
\item The map $\mu\colon\Gamma \Gamma E \to \Gamma E$, $\mu((e, \alpha), \beta) = (e, \alpha \beta)$, is a parallel transport map for $\Gamma \pi\colon \Gamma E \to B$.
\end{itemize}
The fibre $\hat{F}_b = \Gamma \pi^{-1}(b) = \bigcup_{x \in B} F_x \times P_{x,b} B$ has the same homotopy type as $F_b$, since  there is a fibration $\xymatrix{F \ar[r] & \hat{F}_b \ar[r] & P_{B,b} B}$ with a contractible base.  Therefore, any fibration can be replaced by the homotopy equivalent fibration $\xymatrix{\hat{F} \ar[r] & \Gamma E \ar[r]^-{\Gamma \pi} & B}$ \emph{without changing the base}.  We may and will henceforth assume throughout the text that any Hurewicz fibration is endowed with a parallel transport map.

\subsection{Adjoint functor}
A transport map $T$ induces a functor of topological categories, called the adjoint of $T$:
\begin{align*}
T: \Pi B & \longrightarrow \text{Top}\\
\left\{
\begin{array}{c}
x  \in B\\
\gamma \in P_{x,y} B
\end{array}
\right\}
& \longrightarrow
\left\{
\begin{array}{c}
F_x \\
T(\gamma)\colon F_x \to F_y\\
f \mapsto T(f, \gamma)
\end{array}
\right\}.
\end{align*}
The transitivity of $T$ is necessary to get $T(\gamma_2)\circ T(\gamma_1) = T(\gamma_1 \gamma_2)$.  The following properties of the adjoint are obvious:
\begin{itemize}
\item $T(\gamma)$ is a homotopy equivalence.
\item The map $T\colon F_x \times P_{x,y} B \to F_y$ obtained by restriction is continuous.
\end{itemize}

\subsection{Local systems of chain complexes and cubes of transport maps}\label{sec:cubestransport}
Postcomposing the adjoint with the functor $Sing\colon \text{Top} \to \text{Ch}$ taking a space to its cubical chain complex over $\Z / 2 \Z =: \Z_2$, we obtain, for every fibration $\pi\colon E \to B$, a local system of chain complexes over $B$.  By a slight abuse of notation, we give the same name to the resulting functor:
\begin{align*}
T: \Pi B & \longrightarrow \text{Ch}\\
\left\{
\begin{array}{c}
x  \in B\\
\gamma \in P_{x,y} B
\end{array}
\right\}
& \longrightarrow
\left\{
\begin{array}{c}
C_*(F_x) \\
T(\gamma)_*\colon C_*(F_x) \to C_*(F_y)\\
\end{array}
\right\}.
\end{align*}
\noindent\textbf{Warning:}  Strictly speaking, a local system is a functor from the fundamental groupoid to a category, so that every homotopy of paths is mapped to an isomorphism.  In our context, we consider functors from $\Pi B$, so that every path is mapped to a weak-isomorphism.  In the case of chain complexes, these are chain homotopy equivalences.

Concatenation of paths induces a product on cubes.  Indeed, given $\sigma \in C_p(P_{x,y} B)$, $\sigma' \in C_q(P_{y,z} B)$, the product $\sigma \cdot \sigma' \in C_{p+q}(P_{x,z} B)$ is defined by $\sigma \cdot \sigma'(u,v) = \sigma(u)\cdot \sigma'(v)$, where $u \in [0,1]^p$, $v \in [0,1]^q$.  The transport map $T$, being continuous, induces chain maps:
$$
\xymatrix{
 C_p(F_x) \otimes C_q(P_{x,y} B) \ar[r]^{\times} & C_{p+q}(F_x \times P_{x,y} B) \ar[r]^-{T_*} & C_{p+q}(F_y) 
}
$$
which will be written either $T(\alpha \times \gamma)$ or $T(\gamma)(\alpha)$.  Since $T$ is a functor, given $\sigma \in C_*(P_{x,y}B), \; \sigma' \in C_*(P_{y,z} B)$ and $\alpha \in C_*(F_x)$, we get
\begin{align}\label{eq:Tcompatible}
T(\sigma \cdot \sigma')(\alpha) = T(\sigma')\circ T(\sigma)(\alpha).
\end{align}

\begin{rem}[$\infty$-categories]
The language of $\infty$-categories seems well suited to formalize the functorial properties of $T$.  Indeed, $\Pi B$, being a topological category, is an $\infty$-category (see Lurie \cite{Lurie:highertoposbook}).  On the other hand, $\text{Ch}$ is also an $\infty$-category, where 1-morphisms are chain maps, 2-morphisms are chain homotopies, and so on.  Using this language and fixing $x, y \in B$, $T$ induces the following map:
$$T\colon C_p(P_{x,y} B) \to \text{Mor}_{p+1}(C_q(F_x), C_{p+q}(F_y))$$
$$T(\sigma)(\alpha) = T(\alpha \times \sigma),$$
where $\text{Mor}_{p+1}$ are the $(p+1)$-morphisms in $\text{Ch}$.  Indeed, for a zero chain $\sigma \in C_0(P_{x,y} B)$, $T(\sigma)\colon C_q(F_x) \to C_q(F_y)$ is a chain-map admitting a quasi-inverse.  Given a 1-chain $\sigma \in C_1(P_{x,y} B)$, then $T(\sigma)\colon C_q(F_x) \to C_{q+1}(F_y)$ is a chain-homotopy between $T(\sigma(1))$ and $T(\sigma(0))$, and so on.
\end{rem}

\subsection{Genuine local systems}
By identifying homotopic morphisms in $\Pi B$ and applying the homology functor, the adjoint yields a genuine local system of vector spaces over $\Z_2$, denoted by $\mathcal{H}_*(F)$:
$$\xymatrix{
\Pi B \ar[r]^-T \ar[d]^-{\sim}& \text{Ch} \ar[d]^-{H_*}\\
 \Pi_1(B) \ar[r]^-T & \text{Vect}
}$$
Up to isomorphism, this local system is completely determined by its monodromy representation $\pi_1(B, b) \to \text{Aut}(H_*(F_b))$ and its associated local system of operator rings $\Z_2[\pi_1(B, b)] \to \text{End}_{\Z_2}(H_*(F_b))$.

\subsection{Examples}\label{sec:examples}
\subsubsection{The free Moore loop space}\label{sec:freeloopspace}
The free Moore loop space $\Lambda B$ admits the following parallel transport map, that we fix from now on:
$$T\colon \Gamma (\Lambda B) \to \Lambda B$$
$$(\alpha, \gamma) \mapsto \gamma^{-1} \alpha \gamma.$$
Here it is crucial to use Moore loops to obtain $T(T(\alpha, \gamma_1), \gamma_2) = T(\alpha, \gamma_1 \gamma_2)$.  Using paths from the unit interval would yield an equality only up to homotopy.

We compute the system of operator rings on $\text{End}_{\Z_2}(H_0(\Omega B_b))$.  As a $\Z_2$-algebra, we have $H_0(\Omega B_b) \cong \Z_2[\pi_1(B, b)]$, where the algebra structure on homology is the Pontryagin product.  Then it is clear from the definition of the transport that the map
$$\Z_2[\pi_1(B,b)] \to \text{End}(H_0(\Omega B_b)) = \text{End}(\Z_2[\pi_1(B, b)])$$
is induced from the conjugation representation $\pi_1(B, b) \to \text{Aut}(\pi_1(B,b))$.  This local system was used in Floer theory by Konstantinov in \cite[pp. 24-25]{Konst:higherlocal}


\subsubsection{The Moore path space}\label{sec:moorepathspace}
The Moore path space of $B$, based at $* \in B$, admits the following transport map:
$$T\colon \Gamma (PB) \to PB$$
$$(\alpha, \gamma) \mapsto \alpha \beta,$$
which was used by Barraud-Cornea \cite{Bar-Cor:Serre} to define a Floer complex enriched by chains in the based loop space.

The system of operator rings on $\text{End}_{\Z_2}(H_0(\Omega B_b))$ is the map
$$\Z_2[\pi_1(B,b)] \to \text{End}(H_0(\Omega B_b)) = \text{End}(\Z_2[\pi_1(B, b)])$$ induced from the left action of $\pi_1(B,b)$ on itself.  This local system has been fruitful in Lagrangian Floer theory, for example in the work of Abouzaid-Kragh \cite{Abou-Kragh:simplehomtypenearby}, Damian \cite{Dam:Audinsconj}, Lopez-Suarez \cite{Su:exactcob}, and Sullivan \cite{SullM:kinvariants}.

\subsubsection{Fibre bundles and connections}
Let $\pi\colon E \to B$ be a smooth vector bundle endowed with a connection $\mathcal{H}$.  Then $\mathcal{H}$ defines parallel transport isomorphisms along a curve $\gamma \in P_{x, y}B$, denoted by $T(\gamma)\colon E_x \to E_y$.  This map extends to $\Gamma E$ and defines a parallel transport map for $\pi\colon E \to B$. 
\section{The Morse complex enriched over chains in Moore path spaces}
In this section, we follow rather closely a construction of Barraud-Cornea \cite{Bar-Cor:Serre}.  The main difference is that we do not restrict to the Moore path space fibration, our construction is valid for any fibration whose base is a closed manifold.   Moreover, we do not impose simple connectivity assumptions, which accounts for the presence of homology with local coefficients.  

\subsection{Local coefficients}\label{sec:coeff}
Recall from \S \ref{sec:cubestransport} that there is a local system of chain complexes on $B$, by taking, for every $x$ in $B$, the complex $C_*(F_x)$.  This local system of graded chain complexes is denoted by $\mathcal{C}_*(F)$.

\subsection{Representing chain system of flow lines}
Fix $\mathcal{D}=(f, \rho)$ a Morse-Smale pair on $B$.  The set of critical points of $f$, whose index is $k$, is denoted by $\Crit_k f$.  The index of a critical point $x$ is written $|x|$.  Given two critical points $x$ and $y$, the moduli space of negative gradient flow lines connecting $x$ to $y$, modulo $\R$-translation, is $\mathcal{M}(x,y; \mathcal{D})$, or simply $\mathcal{M}(x,y)$ when the context is clear.  By the Morse-Smale condition, it is a manifold of dimension $|x| - |y| -1$, provided it is not empty.  It can be compactified into a manifold with corners $\overline{\mathcal{M}}(x,y)$ whose boundary is $\partial \overline{\mathcal{M}}(x,y) = \bigcup_{z \in \Crit f} \overline{\mathcal{M}}(x,z) \times \overline{\mathcal{M}}(z,y)$.

As each flow line is embedded in $B$, we get a continuous representation of flow lines in the Moore path space:
$$\Phi\colon \mathcal{M}(x,y; \mathcal{D}) \to P_{x,y} B$$
$$\gamma \mapsto \Phi(\gamma)\colon [0, f(x)-f(y)] \to B,$$
where $\Phi(\gamma)(t) = \gamma \cap f^{-1}(f(x)-t)$.  This map extends continuously to the compactification and, when restricted to the boundary, is compatible with composition of paths, in the sense that, given $\gamma_1 \in \overline{\mathcal{M}}(x,z)$ and $\gamma_2 \in \overline{\mathcal{M}}(z,y)$, we have $\Phi(\gamma_1, \gamma_2) = \Phi(\gamma_1)\cdot \Phi(\gamma_2) \in P_{x,y} B$.

Recall from \cite[\S 2.2.3]{Bar-Cor:Serre}, that there exist cubical chains $s^x_y \in C_{|x|-|y|-1}(\overline{\mathcal{M}}(x,y))$, called a representing chain system, satisfying the following two properties:
\begin{enumerate}
\item In the short exact sequence of the pair $(\overline{\mathcal{M}}(x,y), \partial \overline{\mathcal{M}}(x,y))$,   $s^x_y$ is mapped to a cycle in $C_{|x|-|y|-1}(\overline{\mathcal{M}}(x,y), \partial \overline{\mathcal{M}}(x,y))$ representing the fundamental class.
\item $\partial s^x_y = \sum_{z\in \Crit f} s^x_z \times s^z_y$.
\end{enumerate}
Set $a^x_y = \Phi_*(s^x_y)\in C_{|x|-|y|-1}(P_{x,y} B)$.  By the properties of $\Phi$ and the representing chain system, we get $\partial a^x_y = \sum a^x_z \cdot a^z_y$.

\subsection{The complex}
Given a Morse-Smale pair $\mathcal{D} = (f, \rho)$, we define the Morse complex of $f$ with coefficients in the local system $\mathcal{C}_*(F)$ from \S \ref{sec:coeff} by:
$$C_*(B, \mathcal{D}; \mathcal{C}_*(F)) = \bigoplus_{x \in \Crit f} C_*(F_x) \otimes x$$
The degree of a homogeneous element $\alpha \otimes x$, where $\alpha \in C_p(F_x)$, $x \in \Crit_q f$, is $p+q$.  The differential is the $\Z_2$-linear map
$$d\colon C_*(B, \mathcal{D};\mathcal{C}_*(F)) \to C_{*-1}(B, \mathcal{D}; \mathcal{C}_*(F))$$
$$d(\alpha \otimes x) = \sum_{y \in \Crit f} T(a^x_y)(\alpha) \otimes y + (\partial \alpha)\otimes x,$$
where $T(a^x_y)(\alpha)$ is the transport map, see \S \ref{sec:cubestransport}.
\begin{lem}
The map $d$ squares to zero.
\end{lem}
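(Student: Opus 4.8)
The plan is to expand $d^2(\alpha \otimes x)$ for a homogeneous generator with $\alpha \in C_p(F_x)$ and show that every term cancels in pairs. Since everything is over $\Z_2$, no signs intervene and cancellation simply means that each term occurs an even number of times. Writing $b_y := T(a^x_y)(\alpha) \in C_*(F_y)$, the definition of $d$ gives
$$d^2(\alpha \otimes x) = \sum_{y,z} T(a^y_z)(b_y)\otimes z \; + \; \sum_y (\partial b_y)\otimes y \; + \; \sum_y T(a^x_y)(\partial\alpha)\otimes y \; + \; (\partial^2\alpha)\otimes x.$$
The last term vanishes because $\partial^2 = 0$ on $C_*(F_x)$. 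It remains to understand the first three sums, and the whole argument reduces to matching them against each other.

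The key structural input is that the transport pairing
$$C_p(F_x)\otimes C_q(P_{x,y}B) \xrightarrow{\ \times\ } C_{p+q}(F_x \times P_{x,y}B)\xrightarrow{\ T_*\ } C_{p+q}(F_y)$$
is a chain map, being the composite of the cubical cross product with the chain map induced by the continuous transport $T$. This yields the Leibniz rule $\partial\bigl(T(a^x_y)(\alpha)\bigr) = T(\partial a^x_y)(\alpha) + T(a^x_y)(\partial\alpha)$. Substituting this into the second sum, the piece $\sum_y T(a^x_y)(\partial\alpha)\otimes y$ it produces exactly duplicates the third sum, so over $\Z_2$ those two cancel. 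We are therefore reduced to showing
$$\sum_{y,z} T(a^y_z)\bigl(T(a^x_y)(\alpha)\bigr)\otimes z \; + \; \sum_y T(\partial a^x_y)(\alpha)\otimes y = 0.$$

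For this remaining identity I would invoke the two compatibility properties established earlier. First, $\partial a^x_y = \sum_z a^x_z \cdot a^z_y$, so the second sum becomes $\sum_{y,z} T(a^x_z \cdot a^z_y)(\alpha)\otimes y$. Second, functoriality of $T$ in the form \eqref{eq:Tcompatible} rewrites each summand as $T(a^z_y)\bigl(T(a^x_z)(\alpha)\bigr)$. After relabelling the summation indices — swapping the names of the intermediate and terminal critical points — this sum is term-by-term identical to the double sum $\sum_{y,z} T(a^y_z)\bigl(T(a^x_y)(\alpha)\bigr)\otimes z$, and the two cancel over $\Z_2$, giving $d^2 = 0$.

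The genuinely nontrivial content sits in the two compatibility facts being quoted rather than in the bookkeeping. The Leibniz rule relies on $T_*$ and the cross product both being chain maps, so that the positive-dimensional chains $a^x_y$ are handled functorially and not merely the $0$-chains recovering the classical local system; and the identity $\partial a^x_y = \sum_z a^x_z\cdot a^z_y$ encodes the broken-trajectory structure of $\partial\overline{\mathcal{M}}(x,y)$ together with the multiplicativity of $\Phi$ on the boundary. Once these are in hand the cancellation is forced: the algebraic term $T(a^y_z)\circ T(a^x_y)$ coming from iterating $d$ is matched precisely by the geometric term $T(a^x_z\cdot a^z_y)$ coming from the boundary of the representing chain system. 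This is the familiar mechanism underlying $d^2=0$ in Morse homology, now carried out at the level of parametrized families of paths in $\Pi B$ rather than signed counts of isolated flow lines.
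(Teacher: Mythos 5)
Your proof is correct and follows essentially the same route as the paper's: expand $d^2$, apply the Leibniz rule for the transport pairing, use $\partial a^x_y = \sum_z a^x_z \cdot a^z_y$ together with the functoriality identity \eqref{eq:Tcompatible} to match the iterated-transport terms against the boundary terms, and cancel everything over $\Z_2$. The only difference is cosmetic (you convert the boundary term into compositions, while the paper converts the compositions into a boundary term), so there is nothing to add.
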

\begin{proof}
We compute, using (\ref{eq:Tcompatible}):
\begin{align*}
d^2(\alpha \otimes x) =& \sum_{z,y} T(a^y_z)(T(a^x_y)(\alpha)) \otimes z + 
\sum_y \partial(T(a^x_y(\alpha)))\otimes y + 
\sum_y T(a^x_y)(\partial \alpha) \otimes y + 
\partial^2 \alpha \otimes x\\
=& \sum_{z,y} T(a^x_y \cdot a^y_z)(\alpha)\otimes z + 
\sum_y T(a^x_y)(\partial \alpha)\otimes y + \sum_y T(\partial a^x_y)(\alpha)\otimes y + 
\sum_y T(a^x_y)(\partial \alpha) \otimes y\\
=& \sum_z T(\partial a^x_z)(\alpha)\otimes z+
\sum_y T(a^x_y)(\partial \alpha)\otimes y + \sum_y T(\partial a^x_y)(\alpha)\otimes y + 
\sum_y T(a^x_y)(\partial \alpha) \otimes y\\
=& \; 0 \mod 2.
\end{align*}
\end{proof}
We will prove below that the homology of this chain complex is the homology of $E$ with $\Z_2$-coefficients.

\subsubsection{A spectral sequence}\label{sec:filtration}
The Morse index of critical points defines an increasing, bounded, filtration on the complex $C_*(B, \mathcal{D}; \mathcal{C}_*(F))$ and its homology.  Set
$$F_p C = \{ \sum \epsilon_i \gamma_i \otimes x_i \in C_* \; | \; \epsilon_i \in \Z/2\Z, \; |x| \leq p \}.$$
The differential obviously preserves the filtration, hence there is a homology spectral sequence $E^*_{p,q}$ converging to $H(C_*(L, \mathcal{D}; \mathcal{C}_*(F)), d)$.  Before proving that this is the Leray-Serre spectral sequence of the fibration $\pi\colon E \to B$, let us compute the first few pages by hand, as a sanity check.

Page zero is $F_p C / F_{p-1} C$, that is $E^0_{p,q} = C_p(f; \mathcal{C}_q(F))$ and the differential induced from $d$ is the boundary operator on $C_q(F)$.  Therefore,
$E^1_{p,q} \cong C_p(f; \mathcal{H}_q(F)).$  On the first page, the induced differential of $\alpha \otimes x$ is
$d_1 (\alpha \otimes x) = \sum_{|y| = |x|-1} T(a^x_y)(\alpha) \otimes y$, which is well-defined since $\partial a^x_y =0$ for degree reasons.  Therefore, the second page is $E^2_{p,q} \cong H_p^M(B;\mathcal{H}_q(F; \Z_2))$.  By definition, this is the Morse homology of $B$ with local coefficients in $H_q(F;\Z_2)$, where the monodromy of the local system is induced from the adjoint of $T$.  Hence, the second page is isomorphic to the second page of the Leray-Serre spectral sequence of the fibration $\pi\colon E \to B$.  We wish to prove that this isomorphism is induced from a map of spectral sequences.

\subsubsection{Invariance}\label{sec:invariance}
Given two Morse-Smale pairs $\mathcal{D}_i=(f_i, \rho_i)$, one can modify the usual proof of invariance of Morse homology to our context, using the transport map $T$ as well as a generating chain system for the moduli space of gradient flow lines of the homotopy.  As this is similar to the construction in the previous section and is an adaptation of \cite[Theorem 2.10 a.]{Bar-Cor:Serre}, we omit the proof.
\begin{prop}
A generic choice of homotopy between $\mathcal{D}_1$ and $\mathcal{D}_2$ induces a chain-map $$\Phi\colon C_*(B, \mathcal{D}_1; \mathcal{C}_*(F)) \to C_*(B, \mathcal{D}_2; \mathcal{C}_*(F))$$
which respects the filtrations and induces an isomorphism from the respective spectral sequences.
\end{prop}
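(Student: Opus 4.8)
\emph{Plan.} The construction parallels that of the differential, with the moduli spaces of gradient flow lines replaced by those of a homotopy. Fix a generic homotopy $\mathcal{D}_s=(f_s,\rho_s)$, $s\in[0,1]$, from $\mathcal{D}_1$ to $\mathcal{D}_2$. For $x\in\Crit f_1$ and $y\in\Crit f_2$, let $\mathcal{W}(x,y)$ denote the space of trajectories of the non-autonomous gradient of the homotopy running from $x$ to $y$; for a generic choice it is a manifold of dimension $|x|-|y|$, with no $\R$-action to quotient by, and it compactifies to a manifold with corners whose codimension-one boundary is
$$\partial\overline{\mathcal{W}}(x,y)=\bigcup_{z\in\Crit f_1}\overline{\mathcal{M}}(x,z;\mathcal{D}_1)\times\overline{\mathcal{W}}(z,y)\ \cup\ \bigcup_{w\in\Crit f_2}\overline{\mathcal{W}}(x,w)\times\overline{\mathcal{M}}(w,y;\mathcal{D}_2).$$
Exactly as for the flow-line representation in the construction of the differential, each such trajectory is embedded in $B$ and defines a continuous map $\Psi\colon\overline{\mathcal{W}}(x,y)\to P_{x,y}B$, compatible with concatenation along the two boundary strata.

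\emph{Representing chains and the map.} Adapting \cite[Theorem 2.10 a.]{Bar-Cor:Serre} to this two-sided boundary, I would produce cubical chains $w^x_y\in C_{|x|-|y|}(\overline{\mathcal{W}}(x,y))$ representing the relative fundamental class and satisfying $\partial w^x_y=\sum_z s^x_z\times w^z_y+\sum_w w^x_w\times\hat s^w_y$, where $s$ and $\hat s$ are the representing chain systems of $\mathcal{D}_1$ and $\mathcal{D}_2$. Pushing forward, set $b^x_y=\Psi_*(w^x_y)\in C_{|x|-|y|}(P_{x,y}B)$; the properties of $\Psi$ then give
$$\partial b^x_y=\sum_z a^x_z\cdot b^z_y+\sum_w b^x_w\cdot\hat a^w_y,$$
with $a$, $\hat a$ the path-space chains of the two pairs. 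Define $\Phi(\alpha\otimes x)=\sum_y T(b^x_y)(\alpha)\otimes y$. Since $b^x_y$ has degree $|x|-|y|$, the summand $T(b^x_y)(\alpha)\otimes y$ has total degree $p+|x|$ whenever $\alpha\in C_p(F_x)$, so $\Phi$ has degree zero; and $\mathcal{W}(x,y)\neq\emptyset$ forces $|y|\le|x|$, so $\Phi(F_pC)\subseteq F_pC$ and the filtration is respected.

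\emph{Chain map and spectral sequence.} The identity $d_2\Phi=\Phi d_1$ is formally identical to the computation that $d$ squares to zero: expanding $d_2\Phi(\alpha\otimes x)$ with the Leibniz rule $\partial(T(b^x_w)(\alpha))=T(\partial b^x_w)(\alpha)+T(b^x_w)(\partial\alpha)$, the boundary formula above, and the functoriality (\ref{eq:Tcompatible}), one obtains four families of terms; the two families indexed by the $\mathcal{D}_2$-breakings cancel in pairs modulo $2$, and the remaining terms reproduce $\Phi d_1(\alpha\otimes x)$ exactly. Passing to the associated graded, the only filtration-degree-preserving part of $\Phi$ is the diagonal one, $|y|=|x|$, where $b^x_y\in C_0(P_{x,y}B)$ and $T(b^x_y)$ descends to the monodromy of $\mathcal{H}_q(F)$; the induced map between the two $E^1_{p,q}=C_p(f_i;\mathcal{H}_q(F))$ is thus the classical continuation morphism of Morse complexes with coefficients in the genuine local system $\mathcal{H}_*(F)$. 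On $E^2_{p,q}=H^M_p(B;\mathcal{H}_q(F))$ this is the continuation isomorphism (invariance of Morse homology with local coefficients, compatible with the identification of $E^2$ with the Leray-Serre $E^2$ established in \S\ref{sec:filtration}); if a self-contained argument is preferred, one builds the continuation map of the reversed homotopy together with a filtered chain homotopy coming from a generic homotopy-of-homotopies, inducing the identity from $E^2$ on. Either way $\Phi$ is an isomorphism on $E^2$, hence --- since it commutes with every $d_k$ and $E^{k+1}=H(E^k,d_k)$ --- on $E^k$ for all $k\ge2$ and on the abutment $H_*(E;\Z_2)$.

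The main obstacle is not the algebra but the geometric input of the second paragraph: establishing that the homotopy moduli spaces $\overline{\mathcal{W}}(x,y)$ form a compatible system of manifolds with corners with the stated boundary, so that a representing chain system realizing the two-sided boundary relation exists via the inductive scheme of \cite{Bar-Cor:Serre}. This is precisely the step the paper defers as a routine adaptation.
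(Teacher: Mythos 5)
The paper omits this proof, deferring to the adaptation of \cite[Theorem 2.10 a.]{Bar-Cor:Serre} via a representing chain system for the continuation moduli spaces, and your proposal is precisely that intended argument carried out in detail: the two-sided boundary relation for $\overline{\mathcal{W}}(x,y)$, the pushed-forward chains $b^x_y$ with $\partial b^x_y=\sum_z a^x_z\cdot b^z_y+\sum_w b^x_w\cdot\hat a^w_y$, the chain-map verification mirroring $d^2=0$, and the identification of the $E^1$-level map with the classical continuation morphism with local coefficients. This is correct and takes essentially the same approach as the paper.
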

\section{Comparison with the Leray-Serre spectral sequence}
\subsection{A survey of Serre's proof}
In order to compare the spectral sequence from \S \ref{sec:filtration} with the Leray-Serre spectral sequence, we first recall some details of Serre's proof \cite[Chapitre 2, no. 4,5,6]{Serre:fibration}.

Let $\xymatrix{
F \ar[r] &  E  \ar[r]^\pi & B
}
$
be a Hurewicz fibration with a parallel transport map $T$.  Let $T_{p,q} \subset C_{p+q}(E)$ be the cubical chains of degree $p+q$ in $E$ generated by cubes $u$ with the property that $\pi \circ u$ does not depend on its first $q$ coordinates.  Define $T_p = \oplus_q T_{p,q}$.  This gives $C_*(E)$ the structure of a filtered differential graded module, and the resulting spectral sequence is denoted by $\mathcal{E}^*_{p,q}$.  

Given a generator $u \in T_{p,q}$, define $Bu \in C_p(B)$ and $Fu \in C_q(F)$ by
$$Bu(t_1, \dots, t_p) = \pi\circ u(y_1, \dots y_q, t_1, \dots t_q), \quad y_i \text{ arbitrary},$$
$$Fu(t_1, \dots t_q) = u(t_1, \dots, t_q, 0,0, \dots,0).$$
By definition of $T_{p,q}$, these two operations are well defined and $Fu$ lies in the fiber above $\pi \circ u(0, 0, \dots, 0)$.  Set $J_p = C_p(B;\mathcal{C}(F))$ with the differential $d(b \otimes f) = b \otimes \partial f$.  Then Serre's results show that 
$$B \otimes F\colon \mathcal{E}^0_{*,*} \to J_{*,*}$$
$$\sigma \mapsto B\sigma \otimes F\sigma$$
induces a chain-homotopy equivalence.  Moreover, the induced map on the first page of the spectral sequences is a quasi-isomorphism which fits in the following commutative diagram:
\begin{equation}\label{eq:diagSerre}
\xymatrix{
   \mathcal{E}^1_{p,q} \ar[r]^-{B \otimes F}_-{\simeq} \ar[d]^{d_1} &  C_p(B; \mathcal{H}_q(F)) \ar[d]^{\partial_\text{loc}} \\
   \mathcal{E}^1_{p-1,q} \ar[r]^-{B \otimes F}_-{\simeq} &  C_{p-1}(B; \mathcal{H}_q(F))
}
\end{equation}
where $\partial_\text{loc}$ is the singular differential with coefficients in the local system $\mathcal{H}_*(F)$ induced from the fibration.

\subsection{Construction of the isomorphism}
By the result of \S \ref{sec:invariance}, the spectral sequence does not depend on the choice of Morse-Smale pair.  Therefore, take a Morse-Smale pair $\mathcal{D}=(f, \rho)$, where $f$ is a self-indexed Morse function with a unique minimum $m$.  Set $B^t = f^{-1}((-\infty, t])$, where $t \in \R$.  Barraud-Cornea, in \cite[\S 2.4.6]{Bar-Cor:Serre}, introduce the notion of the blow-up of unstable manifolds in order to compare their enriched Morse complex to the Leray-Serre spectral sequence of the path space fibration (similar constructions were also considered by Hutchings-Lee \cite{HutLee:torsion}).  Let us recall some properties of these manifolds, which we give without proof.
\begin{dfn}
Given $x \in \Crit f$, the blow-up of the unstable manifold of $x$ is the manifold with boundary
$$\widehat{\mathcal{M}}(x) = \overline{\mathcal{M}}(x, m)\times [0, f(x)] / \sim$$
where $\sim$ is an equivalence relation which we will not explicitly need.  It satisfies the following properties:
\begin{itemize}
\item It is homeomorphic to a disc of dimension $|x|$.
\item Its boundary satisfies $$\partial \widehat{\mathcal{M}}(x) = \bigcup_y \overline{\mathcal{M}}(x, y)\times \widehat{\mathcal{M}}(y).$$
\item It admits a system of generating chain $\lambda_x \in C_{|x|}(\widehat{\mathcal{M}}(x))$ such that $\partial \lambda_x = \sum_y s^x_y\times \lambda_y$.
\item There is a map $o\colon \widehat{\mathcal{M}}(x) \to B$ induced from $\overline{\mathcal{M}(x, m)}\times [0, f(x)] \to B$ given by intersecting a flow line at times $t \in [0, f(x)]$ with the hypersurface $f^{-1}(t)$.  Its image coincides with the compactification of the unstable manifold of $x$ and gives a cellular decomposition of $B$, with attaching map being $o_{| \partial \widehat{\mathcal{M}}(x)}$.
\item The blow-up has a basepoint $[x]$ such that $o([x]) = x$ and there is a continuous representation 
$\beta\colon \widehat{\mathcal{M}}(x) \to P(\widehat{\mathcal{M}}(x))$ with the property that the image of the composition $\xymatrix{\widehat{\mathcal{M}}(x) \ar[r]^-{\beta} & P(\widehat{\mathcal{M}}(x)) \ar[r]^-o & PB}$ lies in 
$P_{x, B^{|x|}}B$.  The image of this map consists of all flow lines starting at $x$ and ending on some point of the compactified unstable manifold of $x$.
\item The map $o \circ \beta = \beta'$ fits in the commutative diagram:
$$\xymatrix{
\overline{\mathcal{M}}(x,y)\times \widehat{\mathcal{M}}(y) \ar[r]^{\Phi \times \beta'}  \ar[dr]^{\beta'} & P_{x, y}B \times P_{y, B^{|y|}}B \ar[d]^{\cdot}\\
 & P_{x, B^{|x|}}B
}.
$$
\item On the chain level, we have $\beta'_*(s^x_y \times \lambda_y) = a^x_y \cdot \beta'_*(\lambda_y)$.
\end{itemize}
\end{dfn}
Using the blow-up construction, we define a comparison map
$$\Psi\colon C_*(B, \mathcal{D}; \mathcal{C}_*(F)) \to C_*(E)$$
$$\gamma\otimes x \mapsto T_*(\beta'_*(\lambda_x))(\gamma) = T_*(\gamma \times \beta'_*(\lambda_x)).$$
By definition, we have $\Psi(F_p C_{p+q}) \subset T_{p,q}$.
\begin{lem}
The map $\Psi$ is a chain-map.
\end{lem}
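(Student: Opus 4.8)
The plan is to show that $\Psi$ commutes with the two differentials by a direct computation, exploiting the compatibility relation $\partial\lambda_x=\sum_y s^x_y\times\lambda_y$ for the generating chains of the blow-ups, the transport identity \eqref{eq:Tcompatible}, and the fact that the representation $\beta'$ intertwines the boundary of $\widehat{\mathcal M}(x)$ with concatenation in the Moore path space. First I would spell out the two sides. The source differential is $d(\gamma\otimes x)=\sum_y T(a^x_y)(\gamma)\otimes y+(\partial\gamma)\otimes x$, so
$$\Psi\bigl(d(\gamma\otimes x)\bigr)=\sum_y T_*\bigl(T(a^x_y)(\gamma)\times\beta'_*(\lambda_y)\bigr)+T_*\bigl((\partial\gamma)\times\beta'_*(\lambda_x)\bigr).$$
On the target, $\Psi(\gamma\otimes x)=T_*(\gamma\times\beta'_*(\lambda_x))$ is a chain in $C_*(E)$, and I would compute $\partial$ of this cube in $E$. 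Since $T_*$ is a chain map (it is induced by the continuous transport map composed with the cubical chain functor), the boundary distributes over the product $\gamma\times\beta'_*(\lambda_x)$ by the Leibniz rule, giving $\partial\bigl(T_*(\gamma\times\beta'_*(\lambda_x))\bigr)=T_*\bigl((\partial\gamma)\times\beta'_*(\lambda_x)\bigr)+T_*\bigl(\gamma\times\partial\beta'_*(\lambda_x)\bigr)$ modulo $2$.

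The heart of the argument is the middle term, where I would substitute the boundary formula for the blow-up. Applying $\beta'_*$ to $\partial\lambda_x=\sum_y s^x_y\times\lambda_y$ and using the stated chain-level compatibility $\beta'_*(s^x_y\times\lambda_y)=a^x_y\cdot\beta'_*(\lambda_y)$, I obtain $\partial\beta'_*(\lambda_x)=\sum_y a^x_y\cdot\beta'_*(\lambda_y)$. Feeding this into the transport term and invoking the functoriality relation \eqref{eq:Tcompatible}, namely $T(a^x_y\cdot\sigma)(\gamma)=T(\sigma)\circ T(a^x_y)(\gamma)$, gives
$$T_*\bigl(\gamma\times\partial\beta'_*(\lambda_x)\bigr)=\sum_y T_*\bigl(T(a^x_y)(\gamma)\times\beta'_*(\lambda_y)\bigr).$$
Comparing this with the two expressions above shows that $\partial\Psi(\gamma\otimes x)$ matches $\Psi(d(\gamma\otimes x))$ term by term: the $\partial\gamma$ contributions coincide, and the transport contributions coincide after the substitution. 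Hence $\partial\circ\Psi=\Psi\circ d$.

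The step I expect to be the main obstacle is justifying that the boundary of the cube $T_*(\gamma\times\beta'_*(\lambda_x))$ in $C_*(E)$ is captured \emph{exactly} by the algebraic Leibniz rule, with no extraneous boundary faces surviving. Concretely, one must check that the genuine topological boundary of the chain $\beta'_*(\lambda_x)$ in the Moore path space is $\sum_y a^x_y\cdot\beta'_*(\lambda_y)$ and nothing else — that is, that the faces of $\lambda_x$ lying in the interior of $\widehat{\mathcal M}(x)$ cancel (mod $2$) under $\partial$ and only the codimension-one strata $\overline{\mathcal M}(x,y)\times\widehat{\mathcal M}(y)$ of $\partial\widehat{\mathcal M}(x)$ contribute. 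This is precisely what the representing-chain-system property $\partial\lambda_x=\sum_y s^x_y\times\lambda_y$ encodes, so the computation reduces to invoking that identity together with the stated commutative diagram for $\beta'$; the remaining verification is bookkeeping with signs, which vanish over $\Z_2$. I would therefore present the displayed computation as the proof, citing the blow-up properties and \eqref{eq:Tcompatible} at the two places where they are used.
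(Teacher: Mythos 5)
Your proposal is correct and follows essentially the same route as the paper's own proof: apply the Leibniz rule to $\partial T_*(\gamma\times\beta'_*(\lambda_x))$, substitute $\partial\lambda_x=\sum_y s^x_y\times\lambda_y$ together with $\beta'_*(s^x_y\times\lambda_y)=a^x_y\cdot\beta'_*(\lambda_y)$, and convert via the functoriality relation \eqref{eq:Tcompatible} to match $\Psi(d(\gamma\otimes x))$ term by term. The concern you flag about extraneous boundary faces is exactly what the stated properties of the representing chain system and the blow-up already encode, as you note, so no gap remains.
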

\begin{proof}
\begin{align*}
\partial T_*(\beta'_*(\lambda_x))(\gamma) &= T_*(\beta'_*(\sum_y s^x_y\times \lambda_y)) + T_*(\beta'_*(\lambda_x))(\partial \gamma)\\
&= \sum_y T_*(a^x_y\cdot \beta'_*(\lambda_y))(\gamma) + T_*(\beta'_*(\lambda_x))(\partial \gamma)\\
&= \sum_y T_*(\beta'_*(\lambda_y))\circ T_*(a^x_y)(\gamma) + T_*(\beta'_*(\lambda_x))(\partial \gamma)\\
&=\Psi(d(\gamma \otimes x)).
\end{align*}
\end{proof}
\begin{thm}\label{thm:isotoLeraySerre}
The map $\Psi^2_{p,q}\colon E^2_{p,q} \to \mathcal{E}^2_{p,q} $ induced from $\Psi$ on the second page of the respective spectral sequences, is an isomorphism.
\end{thm}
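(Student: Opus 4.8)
\subsection*{Proof strategy}

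The plan is to exploit that $\Psi$ respects the two filtrations, so that it induces morphisms $\Psi^r_{p,q}\colon E^r_{p,q}\to\mathcal{E}^r_{p,q}$ on every page, with $\Psi^2=H_*(\Psi^1,d_1)$. I will not attempt to compare $\Psi^1$ directly, since the two first pages have very different sizes: $E^1_{p,q}\cong C^M_p(B,\mathcal D;\mathcal H_q(F))$ is finitely generated, whereas $\mathcal E^1_{p,q}$ is a full singular object. Instead I compose with Serre's equivalence $B\otimes F$ of diagram \eqref{eq:diagSerre}. Since $B\otimes F$ is already known to induce an isomorphism $\mathcal E^2_{p,q}\xrightarrow{\ \cong\ }H_p(B;\mathcal H_q(F))$ onto singular homology with local coefficients, and since
$$
H_*(\Theta)=(B\otimes F)^2_{p,q}\circ\Psi^2_{p,q},\qquad \Theta:=(B\otimes F)\circ\Psi^1\colon \bigl(E^1_{*,q},d_1\bigr)\to\bigl(C_*(B;\mathcal H_q(F)),\partial_{\text{loc}}\bigr),
$$
it suffices to prove that $\Theta$ induces an isomorphism on homology; then $\Psi^2$ is an isomorphism as well.

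The second step is to compute $\Theta$ explicitly on a generator $[\gamma]\otimes x$, with $x\in\Crit_p f$ and $[\gamma]\in H_q(F_x)$. The chain $\Psi(\gamma\otimes x)=T_*(\gamma\times\beta'_*(\lambda_x))$ lies in $T_{p,q}$, and its $\Gamma\pi$-projection depends only on the $\lambda_x$-coordinates, so its base part is $Bu=o_*(\lambda_x)$, the cellular chain carried by the compactified unstable manifold of $x$. The fiber part $Fu$ is obtained by setting the last $p$ coordinates to $0$, that is, by evaluating the transport at the $0$-corner of $\lambda_x$, which is the basepoint $[x]$ of the blow-up; there $\beta'([x])$ is the constant path at $x$, and since $T$ of a constant path is the identity we get $Fu=\gamma$. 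Hence $\Theta([\gamma]\otimes x)=o_*(\lambda_x)\otimes[\gamma]$, which is precisely the classical comparison map sending a Morse generator to its unstable cell, carrying the fiber class as local coefficient.

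It remains to identify $\Theta$ with the cellular-to-singular comparison \emph{with local coefficients} and to conclude that it is a quasi-isomorphism. The cells $o_*(\lambda_x)$ give a CW decomposition of $B$, so cellular homology with coefficients in the local system $\mathcal H_q(F)$ computes $H_p(B;\mathcal H_q(F))$, a standard fact for local coefficients. Thus $\Theta$ is a quasi-isomorphism provided it intertwines $d_1$ with the cellular (equivalently $\partial_{\text{loc}}$) differential. This compatibility is forced by the chain-map property of $\Psi$ together with diagram \eqref{eq:diagSerre}: the boundary relation $\partial\lambda_x=\sum_y s^x_y\times\lambda_y$, the identity $\beta'_*(s^x_y\times\lambda_y)=a^x_y\cdot\beta'_*(\lambda_y)$, and the functoriality \eqref{eq:Tcompatible} of $T$ together reproduce the cellular differential with its monodromy on the nose.

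The main obstacle I anticipate lies in the bookkeeping of the last two steps: confirming that the local system induced on $\Theta$ by the transport $T$ agrees with the local system $\mathcal H_*(F)$ of the fibration that appears in $\partial_{\text{loc}}$, and that the cellular differential of the unstable-manifold CW structure, twisted by this monodromy, equals $d_1$ exactly rather than merely up to a chain homotopy. Concretely, this amounts to tracking how $B\otimes F$ interacts with the decomposition of $\Psi(\gamma\otimes x)$ along the boundary strata $\overline{\mathcal M}(x,y)\times\widehat{\mathcal M}(y)$, and verifying that transport along $a^x_y$ is precisely the monodromy of $\mathcal H_q(F)$ along the corresponding cellular attaching map.
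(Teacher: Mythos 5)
Your proposal follows essentially the same route as the paper: compose $\Psi^1$ with Serre's equivalence $B\otimes F$, compute the composite on a generator $\gamma\otimes x$ to be $o_*(\lambda_x)\otimes\gamma$, and identify this with the cellular comparison map with local coefficients, which is a quasi-isomorphism. The one place where the paper is slightly more careful is the fiber part: the $0$-corner of each cube of $\lambda_x$ need not be the basepoint $[x]$, so $\beta'(\lambda_x(0,\dots,0))$ is a collection of generally non-constant paths, and one concludes $Fu=\gamma$ (up to the right identification) only because the cell $o_*(\lambda_x)$ is contractible, making the transport independent of the path chosen -- exactly the bookkeeping issue you flagged at the end.
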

Since the filtrations defining the spectral sequences are bounded, we obtain:
\begin{cor}
The homology of $(C(B; \mathcal{C}_*(F)), d)$ is isomorphic to the homology of the total space $E$.
\end{cor}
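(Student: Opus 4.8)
The plan is to deduce the corollary formally from Theorem \ref{thm:isotoLeraySerre} by invoking the comparison theorem for spectral sequences arising from bounded filtrations. Recall that $\Psi$ is a chain map by the preceding lemma, and that it respects the filtrations: the inclusion $\Psi(F_p C_{p+q}) \subset T_{p,q}$ shows that $\Psi$ sends $F_p C$ into Serre's filtration term $T_p$ with no shift in filtration degree. Consequently $\Psi$ induces a morphism of spectral sequences $\Psi^r \colon E^r_{p,q} \to \mathcal{E}^r_{p,q}$ for every $r \geq 0$, compatible with the differentials $d_r$ and with the passage to homology from one page to the next.

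First I would run the standard inductive argument across the pages. Since $\Psi^{r+1}$ is obtained from $\Psi^r$ by taking homology with respect to $d_r$, and since a morphism of complexes that is an isomorphism on the underlying groups and commutes with the differentials necessarily induces an isomorphism on homology, the fact that $\Psi^2_{p,q}$ is an isomorphism (Theorem \ref{thm:isotoLeraySerre}) propagates to give that $\Psi^r_{p,q}$ is an isomorphism for all $r \geq 2$. Here I would use that both filtrations are bounded: on the source, $F_{-1} C = 0$ and $F_p C = C$ for $p \geq \dim B$, because the Morse index satisfies $0 \leq |x| \leq \dim B$; on the target, in each total degree $n$ one has $T_{-1}\cap C_n(E) = 0$ and $T_p \cap C_n(E) = C_n(E)$ for $p \geq n$, since the condition $q = n-p \geq 0$ forces $0 \leq p \leq n$. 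Boundedness guarantees that in each bidegree the pages stabilize after finitely many steps, so $\Psi^\infty_{p,q}\colon E^\infty_{p,q} \to \mathcal{E}^\infty_{p,q}$ is likewise an isomorphism.

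Next I would pass from the $E^\infty$ page to the abutments. A bounded filtration converges strongly, so $E^*$ abuts to $H_*(C(B, \mathcal{D}; \mathcal{C}_*(F)), d)$ and $\mathcal{E}^*$ abuts to $H_*(E;\Z_2)$, each carrying a bounded exhaustive filtration whose associated graded is exactly the corresponding $E^\infty$ page. Because $\Psi$ is filtered, the map it induces on these associated gradeds is precisely $\Psi^\infty$. A finite induction over the finitely many filtration steps, comparing the short exact sequences of successive filtration quotients via the five lemma, then upgrades the isomorphism of associated gradeds to an isomorphism $\Psi_*\colon H_*(C(B;\mathcal{C}_*(F))) \xrightarrow{\sim} H_*(E;\Z_2)$, which is the assertion.

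The main obstacle does not lie in this formal deduction — once convergence is in place the corollary is essentially bookkeeping downstream of Theorem \ref{thm:isotoLeraySerre} — but in certifying that the target spectral sequence genuinely converges to $H_*(E;\Z_2)$. This reduces to checking that Serre's filtration $T_p$ on $C_*(E)$ is exhaustive and bounded in each total degree, a point already recorded in the survey of Serre's proof above. With that strong-convergence statement for $\mathcal{E}^*$ alongside the one for $E^*$, the isomorphism $\Psi^\infty$ on the limit pages yields the desired isomorphism on homology.
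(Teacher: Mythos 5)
Your proposal is correct and follows the same route as the paper: the paper derives the corollary from Theorem \ref{thm:isotoLeraySerre} in one line ("Since the filtrations defining the spectral sequences are bounded, we obtain\dots"), i.e.\ by the standard comparison theorem for morphisms of spectral sequences arising from bounded filtrations, which is exactly the argument you spell out (page-by-page propagation of the isomorphism from $E^2$, stabilization by boundedness, and the five-lemma passage from $E^\infty$ to the abutments). Your elaboration of the boundedness of both filtrations and of the convergence of Serre's filtration is a correct filling-in of details the paper leaves implicit.
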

\begin{proof}
The map $\Psi$ fits in the following commutative diagram (see also (\ref{eq:diagSerre})):
$$
\xymatrix{
  C_p^M(B; \mathcal{H}_q(F)) \ar[r]^-{\Psi^1} \ar[d]^{d_1}& \mathcal{E}^1_{p,q} \ar[r]^-{B \otimes F}  \ar[d]^{d_1} &  C_p(B; \mathcal{H}_q(F)) \ar[d]^{\partial_\text{loc}} \\
   C_{p-1}^M(B; \mathcal{H}_q(F)) \ar[r]^-{\Psi^1} & \mathcal{E}^1_{p-1,q} \ar[r]^-{B \otimes F}&  C_{p-1}(B; \mathcal{H}_q(F))
}.
$$
Recalling the definitions of the maps $B$ and $F$, we have:
\begin{align*}
B(\Psi(\gamma \otimes x)) = B(T_*(\gamma \times \beta'(\lambda_x))) = \pi_* T_*(\gamma \times \beta'(\lambda_x)) =p_* \beta'(\lambda_x) = o_*(\lambda_x).
\end{align*}
We have used the following diagram in the third equality above:
$$
\xymatrix{
F_x \times P_{x, B^{|x|}} B \ar[r]^-{T} \ar[d]^{p_2} & E \ar[d]^\pi\\
P_{x, B^{|x|}} B  \ar[r]^-{p} & B
}
$$
where $p_2$ is the projection to the second factor and $p$ is the endpoint map.  Moreover,
\begin{align*}
F(\Psi(\gamma \otimes x))(t_1, \dots, t_q) = T_*(\gamma(t_1, \dots, t_q) \times \beta'(\lambda_x(0, \dots, 0))).
\end{align*}
Writing $\lambda_x = \sum_i u_i$, we have that $\beta'(\lambda_x(0, \dots, 0))$ is a finite set of paths in $B$, one for each $i$, from $x$ to the leading vertex of the cube $u_i$.  Since $o_*(\lambda_x)$ is mapped to the fundamental class of the cell corresponding to $x$, which is contractible, transport is independent of the path chosen and $B \times F(\Psi(\gamma \otimes x)) $ is equal to $o_*(\lambda_x) \otimes \gamma$, a generator of the cellular complex of $B$ with coefficients in the local system $\mathcal{H}(F)$.  The conclusion follows, since the homology of this cellular complex is isomorphic to cubical homology with local coefficients.  Compare also with \cite[Appendix B]{Abouzaid:nearby}, where it is shown that Morse homology with local coefficients is isomomorphic to singular homology with local coefficients.
\end{proof}

\bibliographystyle{alpha}
\bibliography{../../biblio_latex/bibliography}
\end{document}